  \newcommand\@dotsep{4}
  \def\@tocline#1#2#3#4#5#6#7{\relax
     \ifnum #1>\c@tocdepth 
     \else
     \par \addpenalty\@secpenalty\addvspace{#2}%
     \begingroup \hyphenpenalty\@M
     \@ifempty{#4}{%
     \@tempdima\csname r@tocindent\number#1\endcsname\relax
        }{%
         \@tempdima#4\relax
           }%
      \parindent\z@ \leftskip#3\relax \advance\leftskip\@tempdima\relax
      \rightskip\@pnumwidth plus1em \parfillskip-\@pnumwidth
       #5\leavevmode\hskip-\@tempdima #6\relax
       \leaders\hbox{$\m@th
       \mkern \@dotsep mu\hbox{.}\mkern \@dotsep mu$}\hfill
       \hbox to\@pnumwidth{\@tocpagenum{#7}}\par
       \nobreak
        \endgroup
         \fi}
\begin{document}

\makeatletter
\@addtoreset{figure}{section}
\def\thefigure{\thesection.\@arabic\c@figure}
\def\fps@figure{h,t}
\@addtoreset{table}{bsection}

\def\thetable{\thesection.\@arabic\c@table}
\def\fps@table{h, t}
\@addtoreset{equation}{section}
\def\theequation{
\arabic{equation}}
\makeatother

\newcommand{\bfi}{\bfseries\itshape}

\newtheorem{theorem}{Theorem}
\newtheorem{acknowledgment}[theorem]{Acknowledgment}
\newtheorem{corollary}[theorem]{Corollary}
\newtheorem{definition}[theorem]{Definition}
\newtheorem{example}[theorem]{Example}
\newtheorem{lemma}[theorem]{Lemma}
\newtheorem{notation}[theorem]{Notation}
\newtheorem{problem}[theorem]{Problem}
\newtheorem{proposition}[theorem]{Proposition}
\newtheorem{remark}[theorem]{Remark}
\newtheorem{setting}[theorem]{Setting}
\newtheorem{hypothesis}[theorem]{Hypothesis}

\numberwithin{theorem}{section}
\numberwithin{equation}{section}

\renewcommand{\1}{{\bf 1}}
\newcommand{\Ad}{{\rm Ad}}
\newcommand{\Alg}{{\rm Alg}\,}
\newcommand{\Aut}{{\rm Aut}\,}
\newcommand{\ad}{{\rm ad}}
\newcommand{\Borel}{{\rm Borel}}
\newcommand{\botimes}{\bar{\otimes}}
\newcommand{\Ci}{{\mathcal C}^\infty}
\newcommand{\Cint}{{\mathcal C}^\infty_{\rm int}}
\newcommand{\Conv}{{\rm Conv}}
\newcommand{\Cpol}{{\mathcal C}^\infty_{\rm pol}}
\newcommand{\comp}{{\rm comp}}
\newcommand{\Der}{{\rm Der}\,}
\newcommand{\Diff}{{\rm Diff}\,}
\newcommand{\de}{{\rm d}}
\newcommand{\ee}{{\rm e}}
\newcommand{\End}{{\rm End}\,}
\newcommand{\ev}{{\rm ev}}
\newcommand{\hotimes}{\widehat{\otimes}}
\newcommand{\id}{{\rm id}}
\newcommand{\ie}{{\rm i}}
\newcommand{\iotaR}{\iota^{\rm R}}
\newcommand{\GL}{{\rm GL}}
\newcommand{\gl}{{{\mathfrak g}{\mathfrak l}}}
\newcommand{\Hom}{{\rm Hom}\,}
\newcommand{\Img}{{\rm Im}\,}
\newcommand{\Ind}{{\rm Ind}}
\newcommand{\Ker}{{\rm Ker}\,}
\newcommand{\Lie}{\text{\bf L}}
\newcommand{\Mt}{{{\mathcal M}_{\text t}}}
\newcommand{\m}{\text{\bf m}}
\newcommand{\pr}{{\rm pr}}
\newcommand{\Ran}{{\rm Ran}\,}
\renewcommand{\Re}{{\rm Re}\,}
\newcommand{\so}{\text{so}}
\newcommand{\spa}{{\rm span}\,}
\newcommand{\supp}{{\rm supp}\,}
\newcommand{\Tr}{{\rm Tr}\,}
\newcommand{\tw}{\ast_{\rm tw}}
\newcommand{\U}{{\rm U}}
\newcommand{\UCb}{{{\mathcal U}{\mathcal C}_b}}
\newcommand{\weak}{\text{weak}}

\newcommand{\CC}{{\mathbb C}}
\newcommand{\RR}{{\mathbb R}}
\newcommand{\TT}{{\mathbb T}}
\newcommand{\NN}{{\mathbb N}}

\newcommand{\Ac}{{\mathcal A}}
\newcommand{\Bc}{{\mathcal B}}
\newcommand{\Cc}{{\mathcal C}}
\newcommand{\Dc}{{\mathcal D}}
\newcommand{\Ec}{{\mathcal E}}
\newcommand{\Fc}{{\mathcal F}}
\newcommand{\Hc}{{\mathcal H}}
\newcommand{\Jc}{{\mathcal J}}
\newcommand{\Kc}{{\mathcal K}}
\newcommand{\Lc}{{\mathcal L}}
\renewcommand{\Mc}{{\mathcal M}}
\newcommand{\Nc}{{\mathcal N}}
\newcommand{\Oc}{{\mathcal O}}
\newcommand{\Pc}{{\mathcal P}}
\newcommand{\Qc}{{\mathcal Q}}
\newcommand{\Sc}{{\mathcal S}}
\newcommand{\Tc}{{\mathcal T}}
\newcommand{\Vc}{{\mathcal V}}
\newcommand{\Uc}{{\mathcal U}}
\newcommand{\Xc}{{\mathcal X}}
\newcommand{\Yc}{{\mathcal Y}}
\newcommand{\Zc}{{\mathcal Z}}

\newcommand{\Bg}{{\mathfrak B}}
\newcommand{\Fg}{{\mathfrak F}}
\newcommand{\Gg}{{\mathfrak G}}
\newcommand{\Ig}{{\mathfrak I}}
\newcommand{\Jg}{{\mathfrak J}}
\newcommand{\Lg}{{\mathfrak L}}
\newcommand{\Pg}{{\mathfrak P}}
\newcommand{\Sg}{{\mathfrak S}}
\newcommand{\Xg}{{\mathfrak X}}
\newcommand{\Yg}{{\mathfrak Y}}
\newcommand{\Zg}{{\mathfrak Z}}

\newcommand{\ag}{{\mathfrak a}}
\newcommand{\bg}{{\mathfrak b}}
\newcommand{\dg}{{\mathfrak d}}
\renewcommand{\gg}{{\mathfrak g}}
\newcommand{\hg}{{\mathfrak h}}
\newcommand{\kg}{{\mathfrak k}}
\newcommand{\mg}{{\mathfrak m}}
\newcommand{\n}{{\mathfrak n}}
\newcommand{\og}{{\mathfrak o}}
\newcommand{\pg}{{\mathfrak p}}
\newcommand{\qg}{{\mathfrak q}}
\newcommand{\sg}{{\mathfrak s}}
\newcommand{\tg}{{\mathfrak t}}
\newcommand{\ug}{{\mathfrak u}}
\newcommand{\zg}{{\mathfrak z}}

\newcommand{\ZZ}{\mathbb Z}
\newcommand{\BB}{\mathbb B}
\newcommand{\HH}{\mathbb H}

\newcommand{\ep}{\varepsilon}

\newcommand{\hake}[1]{\langle #1 \rangle }

\newcommand{\scalar}[2]{(#1 \mid#2) }
\newcommand{\dual}[2]{\langle #1, #2\rangle}

\newcommand{\norm}[1]{\Vert #1 \Vert }

\newcommand{\blue}[1]{\textcolor{blue}{#1}}
\newcommand{\red}[1]{\textcolor{red}{#1}}
\newcommand{\green}[1]{\textcolor{green}{#1}}

\markboth{}{}

\makeatletter
\title[Banach space representations of nilpotent Lie groups. Part 1]{Transference for Banach space representations of nilpotent Lie groups.\\ Part 1. Irreducible representations}
\author{Ingrid Belti\c t\u a, Daniel Belti\c t\u a, and Jos\'e E. Gal\'e}
\address{Institute of Mathematics ``Simion Stoilow'' 
of the Romanian Academy, 
P.O. Box 1-764, Bucharest, Romania}
\email{Ingrid.Beltita@imar.ro}
\address{Institute of Mathematics ``Simion Stoilow'' 
of the Romanian Academy, 
P.O. Box 1-764, Bucharest, Romania}
\email{Daniel.Beltita@imar.ro}
\address{Departamento de matem\'aticas and I.U.M.A.,
Universidad de Zaragoza, 50009 Zaragoza, Spain}
\email{gale@unizar.es}
\thanks{This research was partly  supported by Project MTM2013-42105-P and Project MTM2016-77710-P, fondos FEDER, Spain. 
The two first-named authors have also been supported by a Grant of the Romanian National Authority for Scientific Research, CNCS-UEFISCDI, project number PN-II-ID-PCE-2011-3-0131. 
The third-named author has also been supported by Project E-64, D.G. Arag\'on, Spain.}

\keywords{reflexive Banach space; nilpotent Lie group; transference}
\subjclass[2000]{Primary 17B30; Secondary 22E25, 22E27}
\makeatother

\begin{abstract} 
We establish a general CCR (liminarity) property for uniformly bounded irreducible representations of 
nilpotent Lie groups on reflexive Banach spaces, 
extending the well known property of unitary irreducible representations of these groups on Hilbert spaces. 
We also prove that this conclusion fails for many representations on non-reflexive Banach spaces. 
Our approach to these results blends the method of transference from abstract harmonic analysis 
and a systematic use of spaces of smooth vectors with respect to Lie group representations. 
\end{abstract}

\maketitle


\section{Introduction}\label{introduction}


This paper was partially motivated by the well-known fact in operator theory that if $\Hc$ is a separable complex Hilbert space and $\Ac\subseteq\Bc(\Hc)$ is an associative $*$-subalgebra for which there are no nontrivial invariant subspaces, with its norm-closure $\overline{\Ac}$, 
then the following implication holds true: 
$$\Kc(\Hc)\cap\Ac\ne\{0\}\implies\Kc(\Hc)\subseteq\overline{\Ac},  $$
where $\Kc(\Hc)$ is the set of all compact operators on~$\Hc$. 
In particular, if 
$\Ac\subseteq\Kc(\Hc)$ then $\overline{\Ac}=\Kc(\Hc)$.
These implications do not carry over directly to operator algebras on Banach spaces (cf. \cite{RaRo00}), 
but we explore this phenomenon from the perspective of representation theory of Lie groups, as follows. 

If $\pi\colon G\to\Bc(\Hc)$ is a unitary irreducible representation of a nilpotent Lie group on a complex Hilbert space, then it is a classical result that for its corresponding Banach algebra representation $\pi\colon L^1(G)\to\Bc(\Hc)$ 
one has $\pi(L^1(G))\subseteq\Kc(\Hc)$, hence $\overline{\pi(L^1(G))}=\Kc(\Hc)$ by the above discussion. 
(See for instance 
 \cite[Cor. 3]{Di59}
and \cite[Th. 4.2.1]{CG90}.)
The main theorem of the present paper (Theorem~\ref{Banach5}) says that the classical result stated above still holds true with the Hilbert space~$\Hc$ replaced by any reflexive Banach space $\Zc$, the unitary irreducible representation~$\pi$ replaced by any irreducible representation satisfying $\sup\limits_{g\in G}\Vert\pi(g)\Vert<\infty$, 
and with~$\Kc(\Hc)$ replaced by the norm-closure of finite-rank operators on $\Zc$. 
We also prove that this conclusion is sharp in the sense that it fails for many representations on non-reflexive Banach spaces (Theorem~\ref{22feb2018}). 
See also \cite{Ne10} for other interesting results on differentiability of vectors with respect to Banach space representations.

The method for obtaining our results consists in 
the study of the way the classical result transfers from Hilbert spaces to Banach space. 
The techniques we  use  rely on intertwining operators between spaces of smooth vectors for 
representations of $G$ on various Banach spaces, and thereby transferring information between these representations. 
In a sequel to the present paper, we will use this method of transference, along with some of the results established here, in order to study multipliers 
associated to unitary irreducible representations of nilpotent Lie groups. 
We note that other interesting applications of Banach space representations of locally compact groups were recently obtained. 
See for instance \cite{DkJW18}, \cite{GT15}, and the references therein.

\subsection*{Notation}
Throughout this paper we denote by $\Sc(\Vc)$ the Schwartz space 
on a finite-dimensional real vector space~$\Vc$. 
That is, $\Sc(\Vc)$ is the set of all smooth functions 
that decay faster than any polynomial together with 
their partial derivatives of arbitrary order. 
We use $\langle\cdot,\cdot\rangle$ to denote any duality pairing between 
finite-dimensional real vector spaces whose meaning is clear 
from the context. 

We also use the convention that Lie groups are denoted by 
upper case Latin letters and Lie algebras are denoted 
by their corresponding lower case Gothic letters. 
For any finite-dimensional real Lie algebra $\gg$ we denote by $\gg_{\CC}:=\CC\otimes_{\RR}\gg$ 
its complexification, which is a complex Lie algebra, and by 
$\U(\gg_{\CC})$ the universal enveloping algebra of $\gg_{\CC}$, 
which is a complex unital associative $*$-algebra 
and is isomorphic to the algebra of distributions with the support at the unit element $\1\in G$ 
for any Lie group $G$ whose Lie algebra is isomorphic to $\gg$. 

By nilpotent Lie group we always mean a connected, simply connected, nilpotent Lie group. 
For such a group $G$, 
its exponential map $\exp_G\colon\gg\to G$ is a diffeomorphism 
whose inverse is 
denoted by $\log_G\colon G\to\gg$. 
Using this diffeomorphism, one defines the Schwartz space  $\Sc(G):=\{\varphi\circ\log_G\mid \varphi\in\Sc(\gg)\}$. 
We will also need $\Cc_0^\infty(G)$, the space of compactly supported 
smooth functions on $G$.

\section
{CCR property on reflexive Banach spaces}
\label{BSRLieGroups}


A uniformly bounded representation of a locally compact group $G$ on a Banach space is said to have the \emph{CCR property} 
if the closure of the image of $L^1(G)$ under that representation is equal to the norm-closure of finite-rank operators on that Banach space. 
This terminology CCR (completely continuous representation) is inspired by the one 
that has been long used and applied for unitary irreducible representations on Hilbert spaces, where the alternative French name `liminaire' was also used. 
See for instance \cite{Fe62}. 

The main results of this section  
concern nilpotent Lie groups and we actually point out in Remark~\ref{GCR} that they cannot be directly extended further.  
Nevertheless, some auxiliary results and lemmas hold true for more general classes of groups 
hence we have stated and proved them in their natural level of generality 
as they may hold an independent interest.


Let $\Xc$ be any complex Banach space with a fixed norm that defines its topology. 
Assume that $\pi\colon G\to\Bc(\Xc)$ is a continuous representation which is \emph{uniformly bounded}, 
in the sense that $\sup\{\Vert\pi(g)\Vert\mid g\in G\}<\infty$. 
Continuity of $\pi$ means that its corresponding map 
$G\times \Xc\to\Xc$, $(g,x)\mapsto\pi(g)x$, is continuous. 
Then one can define the continuous homomorphism of Banach algebras
$$
\pi\colon L^1(G)\to\Bc(\Xc),\quad \pi(\varphi)=\int\limits_G\varphi(g)\pi(g)\de g
$$
where the above integral is strongly convergent.

In the next definition we recall the smooth vectors for Banach space representations, and the smooth operators for Hilbert space representations.

\begin{definition}\label{smooth_oper}
\hfill
\normalfont
\begin{enumerate}
\item\label{smooth_oper_item1} 
The  space of \emph{smooth vectors} for 
the representation $\pi$ is 
$$
\Xc_\infty:=\{x\in\Xc\mid\pi(\cdot)x\in\Ci(G,\Xc)\}.
$$
The linear space $\Xc_\infty$ is endowed with the linear topology 
which makes the linear injective map 
$\Xc_\infty\to\Ci(G,\Xc)$, $x\mapsto\pi(\cdot)x$
into a linear topological isomorphism onto its image. 
Then $\Xc_\infty$ is a Fr\'echet space which is continuously and densely 
embedded in $\Xc$ 
(see for instance \cite[Prop. 2.2]{BB10} and the references therein). 
The \emph{derived representation} 
$$
\de\pi\colon\gg\to
\End(\Xc_\infty),\quad\de\pi(X)x
=\frac{\de}{\de t}\Big\vert_{t=0}\pi(\exp_G(tX))x
$$
is a Lie algebra representation, so it extends to a unital homomorphism of complex associative algebras 
$\de\pi\colon\U(\gg_{\CC})\to \End(\Xc_\infty)$. 
\item\label{smooth_oper_item2} 
Assume that  $\Xc$ is a Hilbert space.
Then the set $\Bc(\Xc)_\infty$ of \emph{smooth operators} for the representation~$\pi$ 
is 
$$
\Bc(\Xc)_\infty:=\{T\in\Bc(\Xc)\mid T(\Xc)\subseteq\Xc_\infty
\text{ and }T^*(\Xc)\subseteq \Xc_\infty\}.
$$ 
(See \cite[Sect. 3]{BB10} for more details and \cite[Cor. 3.1]{BB10} for the natural topology on $\Bc(\Xc)_\infty$ when $\pi$ is a unitary irreducible representation.)
\end{enumerate}

It is easily checked that $\pi(\Sc(G))\Xc\subseteq\Xc_\infty$. 
In particular, one has a representation of the convolution algebra $\Sc(G)$, 
$$
\pi_{\Sc}\colon \Sc(G)\to\End(\Xc_\infty), \quad \pi_{\Sc}(\varphi):=\pi(\varphi)\vert_{\Xc_\infty}.
$$
\end{definition}

The above representation $\pi_{\Sc}$ is used in this paper as a purely algebraic object (a morphism of associative algebras) without any continuity property. 
In this connection we also make the following definition. 

\begin{definition}
\normalfont
With the above notation, 
the uniformly bounded continuous representation $\pi\colon G\to\Bc(\Xc)$ is called \emph{strongly irreducible} if its associated representation 
$\pi_{\Sc}\colon \Sc(G)\to\End(\Xc_\infty)$ is algebraically irreducible, 
that is, the only linear subspaces of $\Xc_\infty$ that are invariant to all the operators in $\pi_{\Sc}(\Sc(G))$ 
are $\{0\}$ and $\Xc_\infty$. 

On the other hand, the representation $\pi$ is called \emph{topologically irreducible} (for short \emph{irreducible}) if 
the only closed linear subspaces of $\Xc$ that are invariant to all the operators in $\pi(G)$ 
are $\{0\}$ and $\Xc$. 
\end{definition}

\begin{example}\label{Banach2}
\normalfont
If $\Hc$ is a Hilbert space and the bounded continuous representation $\pi\colon G\to\Bc(\Hc)$ is topologically irreducible, 
then $\pi$ is also strongly irreducible. 

In fact, since the group $G$ is amenable and $\pi$ is bounded, it follows that $\pi$ is similar to a unitary representation. 
Then, it is known that unitary irreducible representations of nilpotent Lie groups are strongly irreducible; 
see \cite[Cor. 3.4.1]{How77}. 
\end{example}

\begin{lemma}\label{Banach2.1}
Let $G$ be any connected Lie group 
and assume that $\pi\colon G\to\Bc(\Xc)$ and $\tau\colon G\to\Bc(\Yc)$ 
are uniformly bounded 
continuous representations. 
For any topological linear isomorphism $A\colon\Xc_\infty\to\Yc_\infty$ 
the following assertions are equivalent: 
\begin{enumerate}[(i)]
\item For all $g\in G$ one has $A\pi(g)\vert_{\Xc_\infty}=\tau(g)A$. 
\item For all $X\in\gg$ one has $A\de\pi(X)=\de\tau(X)A$. 
\end{enumerate}
\end{lemma}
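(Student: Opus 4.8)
The plan is to prove the equivalence by differentiating in one direction and integrating (or rather, exponentiating via the flow of one-parameter subgroups) in the other, being careful that all identities are asserted only on the dense Fr\'echet subspace $\Xc_\infty$ where the derived representation is defined.

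First I would prove (i)$\Rightarrow$(ii). Fix $X\in\gg$ and $x\in\Xc_\infty$. By hypothesis, for every $t\in\RR$ we have $A\pi(\exp_G(tX))x=\tau(\exp_G(tX))Ax$ in $\Yc_\infty$. The key point is that $A\colon\Xc_\infty\to\Yc_\infty$ is a topological linear isomorphism, hence in particular continuous for the Fr\'echet topologies; and by definition of those topologies the curves $t\mapsto\pi(\exp_G(tX))x$ and $t\mapsto\tau(\exp_G(tX))Ax$ are smooth into $\Xc_\infty$ and $\Yc_\infty$ respectively (this uses that $x$, resp. $Ax$, is a smooth vector, so that the orbit map lands in $\Ci(G,\Xc)$, combined with the fact that differentiating along the subgroup keeps one inside $\Xc_\infty$; cf. the standard facts about smooth vectors recalled before Definition~\ref{smooth_oper}). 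Therefore I may differentiate the identity $A\bigl(\pi(\exp_G(tX))x\bigr)=\tau(\exp_G(tX))(Ax)$ at $t=0$: since $A$ is continuous linear it commutes with the $\Xc_\infty$-valued derivative, giving $A\,\de\pi(X)x=\de\tau(X)(Ax)$. As $x\in\Xc_\infty$ was arbitrary, $A\,\de\pi(X)=\de\tau(X)A$ on $\Xc_\infty$.

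For (ii)$\Rightarrow$(i) I would run the argument in reverse, recovering the group identity from the Lie-algebra one by a uniqueness-of-solutions argument for an ODE in the Fr\'echet space $\Yc_\infty$. Fix $X\in\gg$ and $x\in\Xc_\infty$, and consider the two curves $u(t):=A\,\pi(\exp_G(tX))x$ and $v(t):=\tau(\exp_G(tX))Ax$ in $\Yc_\infty$. Both are smooth $\Yc_\infty$-valued curves, both start at $Ax$ when $t=0$, and using (ii) together with $\de\pi(X)\pi(\exp_G(tX))x = \pi(\exp_G(tX))\de\pi(X)x$ (equivalently $\frac{\de}{\de t}\pi(\exp_G(tX))x=\de\pi(X)\pi(\exp_G(tX))x$) and the continuity of $A$, one checks $u'(t)=\de\tau(X)u(t)$; likewise $v'(t)=\de\tau(X)v(t)$. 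Thus $w:=u-v$ solves the linear ODE $w'(t)=\de\tau(X)w(t)$ with $w(0)=0$. To conclude $w\equiv0$ I would pass back to the ambient Banach space $\Yc$: composing with the continuous inclusion $\Yc_\infty\hookrightarrow\Yc$, the curve $w$ satisfies in $\Yc$ the identity $w(t)=\tau(\exp_G(tX))Ax - A\pi(\exp_G(tX))x$, and since $t\mapsto\tau(\exp_G(tX))$ is a one-parameter group of bounded operators, $v$ satisfies $v(t)=\tau(\exp_G(tX))v(0)$ while, from $u'=\de\tau(X)u$ read in $\Yc$ with $\de\tau(X)$ the generator of that one-parameter group on the smooth vectors, $u$ is also a solution of the corresponding abstract Cauchy problem with the same initial value; by uniqueness for the $C_0$-group (or simply: $\frac{\de}{\de t}\bigl(\tau(\exp_G(-tX))u(t)\bigr)=0$, which is legitimate since $u(t)\in\Yc_\infty$ so one may apply $\tau(\exp_G(-tX))$ and differentiate the product) one gets $u(t)=v(t)$ for all $t$. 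Hence $A\pi(\exp_G(tX))x=\tau(\exp_G(tX))Ax$. Since $G$ is connected it is generated by the one-parameter subgroups $\{\exp_G(tX):X\in\gg,\ t\in\RR\}$, and the set of $g\in G$ for which $A\pi(g)x=\tau(g)(Ax)$ holds for all $x\in\Xc_\infty$ is a subgroup (using again $\pi(g)\Xc_\infty\subseteq\Xc_\infty$ and $A\pi(g)\vert_{\Xc_\infty}$ being defined), so it is all of $G$, which is (i).

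The routine parts are the verifications that the relevant orbit maps are smooth into the Fr\'echet spaces of smooth vectors and that $A$, being a topological isomorphism, interchanges with $\Xc_\infty$- and $\Yc_\infty$-valued derivatives; these are standard and I would only cite them (e.g. \cite[Prop. 2.2]{BB10}). The main obstacle I anticipate is the uniqueness step in (ii)$\Rightarrow$(i): one must be slightly careful that ``$w'=\de\tau(X)w$, $w(0)=0$ implies $w\equiv0$'' is justified, because $\de\tau(X)$ is an unbounded generator. The clean way around this is the device of differentiating $t\mapsto\tau(\exp_G(-tX))u(t)$, which is permissible precisely because $u(t)$ stays in the domain $\Yc_\infty$ of the generator for all $t$ (as $\pi(\exp_G(tX))x\in\Xc_\infty$ and $A$ maps into $\Yc_\infty$), reducing uniqueness to the fundamental theorem of calculus in a Banach space rather than to any general ODE theory.
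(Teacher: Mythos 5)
Your argument is correct, but it follows a different route from the paper: the paper disposes of this lemma in one line by observing that it is a special case of \cite[Cor. 3.3]{Po72}, Poulsen's general result on intertwining operators and smooth vectors, whereas you give a self-contained proof. Your direction (i)$\Rightarrow$(ii) is the routine differentiation at $t=0$, legitimate once one knows that for a smooth vector the orbit map $t\mapsto\pi(\exp_G(tX))x$ is smooth into the Fr\'echet space $\Xc_\infty$ (a standard fact you correctly cite rather than reprove), so that the continuous operator $A$ commutes with the $\Xc_\infty$-valued derivative. Your direction (ii)$\Rightarrow$(i) is the classical uniqueness device: since $u(t)=A\pi(\exp_G(tX))x$ stays in $\Yc_\infty$, which lies in the domain of the generator of the $C_0$-group $t\mapsto\tau(\exp_G(tX))$, the function $t\mapsto\tau(\exp_G(-tX))u(t)$ is differentiable with vanishing derivative, hence constant, and the passage from one-parameter subgroups to all of $G$ by connectedness and the subgroup argument is fine because $\pi(g)\Xc_\infty\subseteq\Xc_\infty$. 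What each approach buys: yours is elementary and transparent, making visible exactly which regularity facts about smooth vectors are used; the paper's citation is shorter and, more importantly, sets up Poulsen's machinery (closability and injectivity of $A$ viewed as a densely defined operator from $\Xc$ to $\Yc$, uniqueness of the almost-equivalent unitary class) that the paper needs anyway in Lemma~\ref{Banach2.2} and Theorem~\ref{Banach4}, so the single reference does double duty.
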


\begin{proof}
This is a special case of \cite[Cor. 3.3]{Po72}. 
\end{proof}

The following notion of almost equivalence of group representations is suggested by \cite[Def. 3.3]{Po72}. 

\begin{definition}\label{almost-equiv}
\normalfont
In the setting of Lemma~\ref{Banach2.1}, we say the representations $\pi$ and $\tau$ are \emph{almost equivalent}, 
and the operator $A$ is called an \emph{almost equivalence}. 
\end{definition}

\begin{remark}
\normalfont 
The relation of almost equivalence of group representations is reflexive, symmetric, and transitive, 
and for any pair of unitary representations this relation coincides with unitary equivalence by \cite[Th. 3.4]{Po72}.  
\end{remark}

It is interesting to study how various properties of Lie group representations are transferred by almost equivalence.
 Theorem~\ref{Banach5} below  shows that, in the case of nilpotent Lie groups,  
the CCR property of their unitary irreducible representations 
propagates under a suitable form to all their uniformly bounded irreducible representations on reflexive Banach spaces. 


\begin{lemma}\label{Banach2.2}
In the setting of Lemma~\ref{Banach2.1} the following assertions hold: 
\begin{enumerate}[{\rm(i)}]
\item\label{Banach2.2_item1} 
If $A$ is regarded as a densely-defined linear operator from $\Xc$ into $\Yc$, 
then its closure $\overline{A}\colon \Dc(\overline{A})\to\Yc$ is an injective operator. 
\item\label{Banach2.2_item2} 
For every $\varphi\in\Cc_0^\infty(G)$ and $x\in\Dc(\overline{A})$ 
one has $\pi(\varphi)x\in\Xc_\infty\subseteq\Dc(\overline{A})$ and $A\pi(\varphi)x=\overline{A}\pi(\varphi)x=\tau(\varphi)\overline{A}x$. 
\item\label{Banach2.2_item3} If $G$ is a nilpotent Lie group, then the above assertion \eqref{Banach2.2_item2} holds for $\varphi\in\Sc(G)$. 
\end{enumerate}
\end{lemma}

\begin{proof}
That $A$ is a closable operator follows by \cite[Th. 3.2]{Po72}, 
while injectivity of $\overline{A}$ was noted in the proof of \cite[Cor. 3.3]{Po72}. 

Assertions \eqref{Banach2.2_item2}--\eqref{Banach2.2_item3} follow by Hille's theorem which says that closed linear operators commute with the Bochner integral (see \cite[Th. 3.7.12]{HiPh74}). 
\end{proof}

\begin{lemma}\label{Banach3}
Let $G$ be any locally compact group with a set $\Ac\subseteq L^1(G)$ 
that is invariant under right translations by some dense subset $\Gamma$ 
of $G$ and 
such that for every neighborhood $V$ of $\1\in G$ there exists $\varphi\in\Ac$ with $\supp\varphi\subseteq V$, 
$0\le\varphi$ almost everywhere on $G$, and $\int\limits_G\varphi(g)\de g=1$. 

If $\pi\colon G\to\Bc(\Xc)$ is any bounded continuous representation with its corresponding 
Banach algebra representation $\pi\colon L^1(G)\to \Bc(\Xc)$, 
then for any closed linear subspace $\Yc\subseteq\Xc$ one has 
$\pi(G)\Yc\subseteq\Yc\iff\pi(\Ac)\Yc\subseteq\Yc$. 
\end{lemma}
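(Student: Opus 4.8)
The plan is to prove the biconditional $\pi(G)\Yc\subseteq\Yc\iff\pi(\Ac)\Yc\subseteq\Yc$ by two implications, using that $\Yc$ is norm-closed.

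For the forward implication ($\pi(G)\Yc\subseteq\Yc\Rightarrow\pi(\Ac)\Yc\subseteq\Yc$), fix $\varphi\in\Ac$ and $y\in\Yc$. The vector $\pi(\varphi)y=\int_G\varphi(g)\pi(g)y\,\de g$ is a strongly convergent Bochner integral of a function taking values in the closed subspace $\Yc$ (since each $\pi(g)y\in\Yc$ by hypothesis and $g\mapsto\varphi(g)\pi(g)y$ is Bochner integrable as $\pi$ is bounded and continuous and $\varphi\in L^1(G)$). Since a closed subspace of a Banach space is closed under Bochner integration — equivalently, $\Yc$ is the kernel of a family of continuous functionals, each of which passes through the integral — we conclude $\pi(\varphi)y\in\Yc$. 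Hence $\pi(\Ac)\Yc\subseteq\Yc$.

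For the reverse implication ($\pi(\Ac)\Yc\subseteq\Yc\Rightarrow\pi(G)\Yc\subseteq\Yc$), the idea is to first produce an approximate identity inside $\Ac$ and then exploit the right-translation invariance. Let $D\subseteq G$ be the dense subset under whose right translations $\Ac$ is invariant. First I would show that for $g\in D$ one has $\pi(g)\Yc\subseteq\Yc$: choose via the hypothesis a net $(\varphi_V)$ indexed by neighborhoods $V$ of $\1$ with $\varphi_V\in\Ac$, $\supp\varphi_V\subseteq V$, $\varphi_V\ge0$, $\int_G\varphi_V=1$; by the standard approximate-identity argument (continuity of $\pi$ in the strong operator topology and $\sup_g\Vert\pi(g)\Vert<\infty$) one gets $\pi(\varphi_V)y\to y$ in norm for each $y\in\Yc$. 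Now for $y\in\Yc$ and $g\in D$, the right-translate $\varphi_V^g$ defined by $\varphi_V^g(h)=\varphi_V(hg^{-1})$ (up to the modular factor, which is harmless since $\varphi_V\ge 0$ and we only need $\varphi_V^g\in\Ac$, guaranteed by the invariance hypothesis) satisfies $\pi(\varphi_V^g)y=\pi(\varphi_V)\pi(g)y$ by the change of variables $h\mapsto hg^{-1}$ in the defining integral. By hypothesis $\pi(\varphi_V^g)y\in\Yc$, and letting $V\downarrow\{\1\}$ gives $\pi(\varphi_V)\pi(g)y\to\pi(g)y$, so $\pi(g)y\in\overline{\Yc}=\Yc$. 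Thus $\pi(D)\Yc\subseteq\Yc$, and since $D$ is dense in $G$, $\Yc$ is closed, and $g\mapsto\pi(g)y$ is norm-continuous, we extend to $\pi(G)\Yc\subseteq\Yc$.

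The main obstacle I anticipate is bookkeeping around the right-translation action on $\Ac$ and the modular function: one must verify that the right-translated approximate identity $\varphi_V^g$ genuinely lies in $\Ac$ (only guaranteed for $g$ in the dense set $D$, which is exactly why the final density argument is needed) and that the operator identity $\pi(\varphi^g)=\pi(\varphi)\pi(g)$ holds with the correct normalization — the modular factor $\Delta(g)$ appearing under right translation does not affect membership in $\Ac$ or positivity but must be tracked so that the integral $\int_G\varphi_V^g$ is handled consistently (alternatively, one absorbs it by rescaling, which is legitimate since we only use $\varphi_V^g$ up to a positive scalar and only need convergence of $\pi(\varphi_V)\pi(g)y$ to $\pi(g)y$). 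Everything else is the routine approximate-identity estimate together with the elementary fact that closed subspaces absorb Bochner integrals.
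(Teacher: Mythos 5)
Your proposal is correct and takes essentially the same route as the paper: the forward implication via stability of the closed subspace under (Bochner) integration, and the converse via an approximate identity chosen inside $\Ac$, right-translated by elements of the dense set so that $\pi(\varphi_V)\pi(g)y$ lies in $\Yc$ and converges to $\pi(g)y$, followed by density of the translating set and norm-continuity of $g\mapsto\pi(g)y$. The points you flag (modular factor, whether translation by $g$ or $g^{-1}$ lands in $\Ac$) are harmless exactly as you say, since a positive scalar and replacing the dense set by its inverse do not affect the argument.
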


\begin{proof}
The implication ``$\Rightarrow$'' is clear. 
For the converse implication we use the well known fact that for $V$ and  
$\varphi\in\Ac$ as in the statement 
one has 
$$
\Vert\pi(\varphi)x-x\Vert\le\int\limits_G\varphi(g)\Vert\pi(g)x-x\Vert\de g
\le \sup\limits_{g\in V}\Vert\pi(g)x-x\Vert
$$
for all $x\in \Xc$. 
Thus, by using in addition the continuity of $\pi\colon G\to\Bc(\Xc)$, it follows that 
$$
(\forall x\in\Xc)\quad \inf\limits_{\varphi\in\Ac}\Vert\pi(\varphi)x-x\Vert=0.
$$
Then, by the hypothesis that $\Ac$ is invariant under right translations by 
$\Gamma\subseteq G$, 
we obtain 
$$
(\forall x\in\Xc)(\forall g\in\Gamma)\quad 
\inf\limits_{\varphi_{g^{-1}}\in\Ac}\Vert\pi(\varphi)x-\pi(g)x\Vert
=\inf\limits_{\varphi\in\Ac}\Vert\pi(\varphi)\pi(g)x-\pi(g)x\Vert=0,
$$
where $\varphi_{g^{-1}}:=\varphi(\cdot g^{-1})$. 
This implies that if $\Yc\subseteq\Xc$ is any closed linear subspace with 
$\pi(\Ac)\Yc\subseteq\Yc$, then $\pi(\Gamma)\Yc\subseteq\Yc$, 
hence also $\pi(G)\Yc\subseteq\Yc$. 
\end{proof}

We recall that the unitary dual of a Lie group is a set whose points are the unitary equivalence classes of unitary irreducible representations of that group. 

\begin{theorem}\label{Banach4}
Let $\pi\colon G\to\Bc(\Xc)$ be any uniformly bounded continuous representation of a nilpotent Lie group $G$. 
Then $\pi$ is topologically irreducible if and only if it is strongly irreducible. 
If this is the case, then there exists a unique point in the unitary dual of $G$ 
consisting of unitary irreducible representations of $G$ which are almost equivalent to $\pi$ 
and whose spaces of smooth vectors are isomorphic as $\Sc(G)$-modules with the space of smooth vectors of~$\pi$. 
\end{theorem}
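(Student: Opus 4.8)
The plan is to reduce the statement to the classical unitary case at the level of smooth vectors, exploiting that, modulo a primitive ideal, the Schwartz algebra of a nilpotent Lie group is an algebra of smoothing kernel operators admitting an essentially unique irreducible module. The implication ``strongly irreducible $\Rightarrow$ topologically irreducible'' is the elementary half. If $\Yc\subseteq\Zc$ is a nonzero closed $\pi(G)$-invariant subspace, then $\pi(\Sc(G))\Yc\subseteq\Yc$, since Bochner integrals of $\Yc$-valued continuous maps stay in the closed subspace $\Yc$, while $\pi(\Sc(G))\Zc\subseteq\Zc_\infty$; hence $\Yc\cap\Zc_\infty$ is a $\pi_\Sc(\Sc(G))$-submodule of $\Zc_\infty$, and it is nonzero because $\pi(\varphi)y\to y$ along an approximate identity $\varphi\in\Cc_0^\infty(G)$ for a fixed $0\neq y\in\Yc$. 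Strong irreducibility gives $\Yc\cap\Zc_\infty=\Zc_\infty$, and density of $\Zc_\infty$ in $\Zc$ yields $\Yc=\Zc$.

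For the converse and the construction of the unitary representation, assume $\pi$ is topologically irreducible. I would first record, via the Dixmier--Malliavin theorem, that $\Zc_\infty=\pi(\Cc_0^\infty(G))\Zc=\pi(\Cc_0^\infty(G))\Zc_\infty$. Combined with Lemma~\ref{Banach3} (applied with $\Ac=\Sc(G)$), this upgrades topological irreducibility of $\pi$ to the assertion that \emph{every nonzero $\pi_\Sc(\Sc(G))$-submodule $\Mc$ of $\Zc_\infty$ is dense in $\Zc_\infty$}: the closure of $\Mc$ in $\Zc$ is a nonzero closed $\pi(G)$-invariant subspace, hence equals $\Zc$, and writing an arbitrary smooth vector as a finite sum $\sum_i\pi(\varphi_i)y_i$ with $\varphi_i\in\Cc_0^\infty(G)$ and $y_i\in\Zc$, and approximating each $y_i$ in $\Zc$ by elements of $\Mc$, the continuity of $\pi(\varphi_i)\colon\Zc\to\Zc_\infty$ places that vector in the closure of $\Mc$ in $\Zc_\infty$. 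Since $\varphi\mapsto\pi(\varphi)$ is continuous from $\Sc(G)$ into $\Bc(\Zc)$ and $\Zc_\infty$ is dense in $\Zc$, the two-sided ideal $P:=\Ker\pi_\Sc$ is closed; and by the density property just proved it is prime (a closed ideal of $\Sc(G)$ not contained in $P$ sends $\Zc_\infty$ onto a dense subspace), while $\Zc_\infty$ is a nondegenerate $\Sc(G)/P$-module with no proper nonzero closed submodule.

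Now I would invoke the structure theory of nilpotent Lie groups: for $G$ nilpotent the closed prime ideals of $\Sc(G)$ are exactly the kernels $\Ker\rho_\Sc$ of the unitary irreducible representations, the assignment $\rho\mapsto\Ker\rho_\Sc$ being a bijection compatible with the Kirillov correspondence (the Schwartz-algebra refinement of Dixmier's description of ${\rm Prim}\,\U(\gg_\CC)$; see \cite{Pe94} and the references therein). Thus $P=\Ker\rho_\Sc$ for a unitary irreducible representation $\rho$ on $\Hc=L^2(\RR^{d/2})$, unique up to unitary equivalence, and both $\pi_\Sc$ and $\rho_\Sc$ are faithful representations of $\Sc(G)/P$. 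By Pedersen's calculus, this quotient is isomorphic to the algebra of operators on $\Sc(\RR^{d/2})$ with Schwartz kernels, which --- like an algebra of compact operators on a Hilbert space --- is topologically simple, has minimal idempotents, acts algebraically irreducibly on its standard module $\Sc(\RR^{d/2})\cong\Hc_\infty$, and admits no other nondegenerate topologically irreducible module up to isomorphism. Matching $\Zc_\infty$ against this unique module yields an $\Sc(G)$-module isomorphism $A\colon\Zc_\infty\to\Hc_\infty$; in particular $\Zc_\infty$ is algebraically irreducible over $\Sc(G)$, so $\pi$ is strongly irreducible and the equivalence is proved. Using Dixmier--Malliavin again, $A$ intertwines the derived representations of $\gg$ (rewrite a smooth vector as $\sum_i\pi(\varphi_i)m_i$ with $m_i\in\Zc_\infty$ and differentiate the $\varphi_i$), so by Lemma~\ref{Banach2.1} $A$ is an almost equivalence between $\pi$ and $\rho$; and if $\rho'$ is another unitary irreducible representation whose smooth vectors are $\Sc(G)$-module isomorphic to $\Zc_\infty$, then $\rho$ and $\rho'$ are almost equivalent, hence unitarily equivalent by \cite[Th.~3.4]{Po72} (see the remark following the definition of almost equivalence). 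This proves uniqueness of the point of $\widehat G$.

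The principal obstacle is the structural input of the third paragraph: identifying $\Ker\pi_\Sc$ with a primitive ideal of $\Sc(G)$ and recognizing $\Sc(G)/\Ker\pi_\Sc$ as an algebra of smoothing kernel operators all of whose nondegenerate topologically irreducible modules are isomorphic to $\Hc_\infty$. Granting this, the remaining ingredients --- the elementary implication, the Dixmier--Malliavin bookkeeping, the passage from an $\Sc(G)$-module isomorphism to an almost equivalence, and the final uniqueness via \cite{Po72} --- are routine.
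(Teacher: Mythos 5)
Your first half (strong $\Rightarrow$ topological irreducibility) is essentially the paper's argument, and your closing steps (module isomorphism $\Rightarrow$ almost equivalence via Lemma~\ref{Banach2.1}, uniqueness via \cite[Th.~3.4]{Po72}) also match. The problem is the converse direction, where—as you yourself flag—everything is made to rest on the ``structural input'' of your third paragraph, and that input is not available in the form you state it. The paper does not attempt such a structure theory: it applies Lemma~\ref{Banach3} with $\Ac=\Sc(G)$ to conclude that $\pi\colon\Sc(G)\to\Bc(\Zc)$ is a topologically irreducible Banach-algebra representation, and then invokes Ludwig's theorem \cite{Lu90} on topologically irreducible representations of the Schwartz algebra of a nilpotent Lie group, which is precisely the statement you are trying to reconstruct. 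So the gap in your proposal is exactly the theorem of \cite{Lu90}; what you call ``routine'' afterwards is fine, but the obstacle is the whole point.

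Moreover, the bridge you propose for that obstacle is broken in two places. First, the claim that the closed prime ideals of $\Sc(G)$ are exactly the kernels $\Ker\rho_{\Sc}$ of unitary irreducible representations is false already for $G=\RR$: under the Fourier transform, the ideal of Schwartz functions all of whose derivatives vanish at a fixed point $\xi$ is a closed prime ideal of the convolution algebra $\Sc(\RR)$ (primeness follows from the Leibniz rule applied to the lowest nonvanishing derivatives), and it is strictly contained in, hence distinct from, every character kernel. So knowing that $\Ker\pi_{\Sc}$ is closed and prime does not identify it with a primitive ideal; the extra module-theoretic information you have is exactly what \cite{Lu90} exploits, and it cannot be bypassed by a prime-ideal argument. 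Second, even granting $\Sc(G)/\Ker\pi_{\Sc}\simeq\Bc(\Hc)_\infty$, the assertion that this Fr\'echet algebra of smoothing operators has, up to topological module isomorphism, a unique nondegenerate topologically irreducible module is not a routine analogue of the compact-operator case: with only topological irreducibility, the standard minimal-idempotent argument produces an injective module map of $\Hc_\infty$ onto a dense submodule of $\Zc_\infty$, not a surjective topological isomorphism, and upgrading it is the substantive analytic content of \cite{Lu90}. In short, the proposal is a plausible outline of how such a theorem might be organized, but it assumes (and partly misstates) the one ingredient that carries the proof.
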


\begin{proof}
First let $\pi\colon G\to\Bc(\Xc)$ be any strongly irreducible representation 
and $\{0\}\ne\Yc\subseteq\Xc$ be any closed linear subspace with $\pi(G)\Yc\subseteq\Yc$. 
Then one has also $\pi(\Sc(G))\Yc\subseteq\Yc$ by Lemma \ref{Banach3}. 
On the other hand $\pi(\Sc(G))\Yc\subseteq\Xc_\infty$ and $\pi(\Sc(G))\Yc$ is a dense linear subspace of $\Yc$. 
Moreover, since $\{0\}\ne\Yc$, it easily checked that $\{0\}\ne\pi(\Sc(G))\Yc$ 
hence $\pi(\Sc(G))\Yc=\Xc_\infty$ by the assumption that $\pi$ is strongly irreducible 
along with the fact that the linear space $\pi(\Sc(G))\Yc$ is invariant under the algebra representation 
$\pi_{\Sc}\colon \Sc(G)\to\End(\Xc_\infty)$.
Thus $\Xc_\infty=\pi(\Sc(G))\Yc\subseteq\Yc$, which implies $\Xc=\Yc$, hence $\pi$ is topologically irreducible.

For the converse implication assume that $\pi$ is topologically irreducible. 
Using Lemma~\ref{Banach3} for $\Ac=\Sc(G)$, it then follows that 
$\pi\colon\Sc(G)\to\Bc(\Xc)$ is a topologically irreducible representation, 
and now the conclusion follows 
by \cite[Th.]{Lu90}. 
The uniqueness of the point in the unitary dual of $G$ as in the statement 
follows by \cite[Th. 3.4]{Po72}. 
\end{proof}

The second part of the above theorem says that one can find a unitary irreducible representation 
$\pi_0\colon G\to\Bc(\Hc)$ (on a Hilbert space)
and a closed injective linear operator $A\colon\Dc(A)\to\Xc$ whose domain $\Dc(A)$ is dense in $\Hc$ and contains $\Hc_\infty$,  
with range $\Ran A$ dense in $\Xc$ and contains $\Xc_\infty$, 
and which defines by restriction a topological isomorphism of $\Sc(G)$-modules 
$A\vert_{\Hc_\infty}\colon\Hc_\infty\to\Xc_\infty$. 
Next, we take advantage of the properties of $\pi_0$ to establish Theorem \ref{Banach5}. 
We recall that $\Kc(\Xc)$  denotes the norm-closure of the finite-rank operators in $\Bc(\Xc)$, 
which is equal to the set of all compact operators on~$\Xc$ if the  Banach space $\Xc$ has the approximation property. 

\begin{theorem}\label{Banach5}
Let $\pi\colon G\to\Bc(\Zc)$ be any uniformly bounded continuous representation of a nilpotent Lie group $G$. 
If $\pi$ is topologically irreducible then the following assertions hold: 
\begin{enumerate}[{\rm(i)}]
	\item\label{Banach5_item1} The set $\pi(\Sc(G))$ consists of nuclear operators on~$\Zc$, and 
	one has $\pi(L^1(G))\subseteq \Kc(\Zc)$ 
	\item\label{Banach5_item2}  
	If moreover the Banach space $\Zc$ is reflexive, 
	then $\pi(\Sc(G))$ contains a norm-dense subspace of $\Kc(\Zc)$ and the representation~$\pi$ has the CCR property.
\end{enumerate}
\end{theorem}

\begin{proof} 
\eqref{Banach5_item1} 
We prove first that for every $\varphi\in \Sc(G)$,  the operator $\pi(\varphi)\colon\Zc\to\Zc$ is nuclear. 
To this end note that $\pi(\varphi)\Zc\subseteq\Zc_\infty$, and then $\pi(\varphi)\colon\Zc\to\Zc_\infty$ 
is a continuous operator by the closed graph theorem, using the fact that the inclusion map $\Zc_\infty\hookrightarrow\Zc$ 
is continuous and $\Zc_\infty$ is a Fr\'echet space. 
Then the operator $\pi(\varphi)$ factorizes as 
$\Zc\mathop{\longrightarrow}
\limits^{\pi(\varphi)}\Zc_\infty\hookrightarrow\Zc$.  
Therefore the assertion will follow by 
\cite[Prop. 47.1 and Th. 50.1((a),(c))]{Tr67}
as soon as we will have proved that $\Zc_\infty$ is a nuclear space. 
 
For that, we use the fact that Theorem~\ref{Banach4} gives  
a topological isomorphism of $\Sc(G)$-modules $\Hc_\infty\simeq\Zc_\infty$
The space of smooth vectors $\Hc_\infty$ is well-known to be a nuclear Fr\'echet space 
(see for instance \cite[Cor. 3.1(1.)]{BB10}), hence also $\Zc_\infty$ is a nuclear space.

It follows  that for arbitrary $\varphi\in \Sc(G)$ one has 
$\pi(\varphi)\in \Kc(\Zc)$, and this, together with the continuity of $\pi$, implies the inclusion $\pi(L^1(G))\subseteq\Kc(\Zc)$.

\eqref{Banach5_item2} 
Assume that the Banach space~$\Zc$ is reflexive. 
To prove that the inclusion $\pi(L^1(G))\subseteq\Kc(\Zc)$ is actually an equality, 
it suffices to check that $\pi(\Sc(G))$ contains a dense subspace of $\Kc(\Zc)$. 
More precisely, for a suitable dense linear subspace $\Yc\subseteq\Zc^*$ we  prove that 
\begin{equation}\label{Banach5_proof_eq1}
\Yc\otimes\Zc_\infty\subseteq\pi(\Sc(G)).
\end{equation} 
The set of finite-rank operators 
$$
\overline{\Hc_\infty}\otimes\Hc_\infty:=
\spa\{(\cdot\mid h)k\mid h,k\in\Hc_\infty\}
$$ 
is contained in $\Bc(\Hc)_\infty$, 
where $(\cdot\mid\cdot)$ is the scalar product of the Hilbert space $\Hc$. 
Since the  map 
$\pi_0\colon\Sc(G)\to\Bc(\Hc)_\infty$ is surjective (see \cite[Th. 3.4]{How77}), 
for any $h,k\in\Hc_\infty$ 
there exists $\psi\in\Sc(G)$ such that one has $\pi_0(\psi)v=(v\mid h)k$ for all $v\in\Hc_\infty$.
Using the topological isomorphism of $\Sc(G)$-modules 
$A\colon\Hc_\infty\to\Zc_\infty$ 
we now obtain 
$$
\pi(\psi)Av=A\pi_0(\psi)v=(v\mid h)Ak=(A^{-1}Av\mid h)Ak
=\langle (A^{-1})^*\overline{h},Av\rangle Ak
$$
where $\overline{h}:=(\cdot\mid h)\in\Hc^*\subseteq(\Hc_\infty)^*$ 
and $(A^{-1})^*\colon (\Hc_\infty)^*\to(\Zc_\infty)^*$ 
is the transpose mapping of $A^{-1}$. 
It follows that
\begin{equation}\label{Banach5_proof_eq2}
(\forall w\in\Zc_\infty)\quad \pi(\psi)w
=\langle (A^{-1})^*\overline{h},w\rangle Ak.
\end{equation}
Here we know that $\pi(\psi)\in\Bc(\Zc)$ since $\psi\in\Sc(G)$ and $\pi$ is uniformly bounded, 
hence using also the fact that $\Zc_\infty$ is dense in $\Zc$, 
it follows that for arbitrary $h\in\Hc_\infty$ the functional 
$(A^{-1})^*\overline{h}\in(\Zc_\infty)^*$ is continuous with respect to the norm of $\Zc$ 
and then it uniquely extends to a functional 
$(A^{-1})^*\overline{h}\in\Zc^*$.  
Thus, denoting 
$$
\Yc:=\spa\{(A^{-1})^*\overline{h}\mid h\in\Hc_\infty\}\subseteq\Zc^*
$$
one obtains \eqref{Banach5_proof_eq1}. 
It remains to check that $\Yc$ is a dense subspace of $\Zc^*$. 

To this end we argue by contradiction. 
If $\Yc$ were not dense in $\Zc^*$, then by the Hahn-Banach theorem 
there would exist  
$\gamma\in(\Zc^*)^*\setminus\{0\}$ with $\gamma(y)=0$ for all $y\in\Yc$. 
Using the fact that $\Zc$ is a reflexive Banach space 
it then follows that there exists 
$x\in\Zc\setminus\{0\}$ 
with $\langle (A^{-1})^*\overline{h},x\rangle=0$ for all $h\in\Hc_\infty$.  
By \eqref{Banach5_proof_eq2}, this implies that 
$\pi(\psi)x=0$ for every $\psi\in\Sc(G)$ with 
$\pi_0(\psi)\in (\Hc^*)_\infty\otimes\Hc_\infty$. 

But the surjective map $\pi_0\colon\Sc(G)\to\Bc(\Hc)_\infty$ is open by the open mapping theorem, 
and on the other hand the set of finite-rank operators $\overline{\Hc_\infty}\otimes\Hc_\infty$  
is dense in the Fr\'echet space $\Bc(\Hc)_\infty$ 
by \cite[Cor. 3.3]{BB10}.
Then it is straightforward to check that 
$\{\psi\in\Sc(G)\mid \pi_0(\psi)\in \overline{\Hc_\infty}\otimes\Hc_\infty\}$ 
is a dense subset of $\Sc(G)$. 
This implies $\pi(\psi)x=0$ for every $\psi\in\Sc(G)$, hence $x=0$, 
which is a contradiction with $x\in\Zc\setminus\{0\}$. 
This completes the proof of the theorem. 
\end{proof}

In order to point out the significance of Theorem~\ref{Banach5}, 
we recall that many important Banach spaces, 
as for instance $L^p$-spaces or more general mixed-norm Lebesgue spaces $L^{p_1,\dots,p_k}$, carry 
irreducible representations of nilpotent Lie groups or even exponential solvable Lie groups 
(that is, Lie groups $G$ whose exponential map $\exp_G\colon\gg\to G$ is bijective); 
see \cite{LuMo01} and \cite{LuMiMo03}. 
Let us also make the following remark that shows 
that Theorem~\ref{Banach5} cannot be extended beyond nilpotent Lie groups. 

\begin{remark}\label{GCR}
\normalfont
Let $G$ be any exponential solvable Lie group.
Then the following assertions are equivalent:
\begin{enumerate}[(i)]
\item\label{GCR_item1} $G$ is a  CCR (liminary) 
group, that is, every unitary irreducible 
representation of $G$ on a Hilbert space has the CCR property; 
\item\label{GCR_item2} $G$ is type R, that is, for every $g\in G$ all the eigenvalues
of the operator $\Ad_G g\colon\gg_{\CC}\to\gg_{\CC}$ belong to  the
unit circle $\TT$
(equivalently, for every $x\in\gg$, all the eigenvalues of the
operator $\ad_{\gg} x\colon\gg_{\CC}\to\gg_{\CC}$
are purely imaginary);
\item\label{GCR_item3} every coadjoint orbit of $G$ is a closed subset of $\gg^*$;
\item\label{GCR_item4} $G$ is a nilpotent Lie group.
\end{enumerate}
In fact, since $G$ is an exponential Lie group, 
it is a connected, simply connected, solvable Lie group of type~I 
(see \cite[Sect. 0, Rem. 3]{AuKo71}).
Then, using also Glimm's characterization of separable $C^*$-algebras of type~I,
we obtain that $G$ has the property GCR (is postliminary),  
that is, for every unitary irreducible representation $\pi\colon G\to\Bc(\Hc_\pi)$ all compact operators on $\Hc_\pi$  are contained in the norm-closure of $\pi(L^1(G))$.
Hence \eqref{GCR_item1}--\eqref{GCR_item2} are equivalent by \cite[Ch. V, Th. 1--2]{AuMo66}.

By \cite[Prop. 5.2.13, Th. 5.2.16]{FuLu15}, since $G$ is an
exponential Lie group,
it follows that for every $x\in\gg$, the operator 
$\ad_G x\colon\gg_{\CC}\to\gg_{\CC}$
has no nonzero purely imaginary eigenvalues,
hence 
\eqref{GCR_item2} and \eqref{GCR_item4} 
 are equivalent.
Finally, the equivalence 
of \eqref{GCR_item1} to \eqref{GCR_item3} is
discussed in \cite[Th. 5.3.31 infra]{FuLu15} (see also \cite[Th.
1]{Pu68}).
\end{remark}

\section{CCR fails on non-reflexive Banach spaces}


We now turn to a obtaining a result (Theorem~\ref{22feb2018}) which shows that the CCR property in Theorem~\ref{Banach5} may not be obtained if the hypothesis of reflexivity of the Banach space $\Zc$ in removed, 
and also that the CCR property of Banach space representations is not preserved by almost equivalence. 

\begin{lemma}\label{21feb2018}
Let $\Xc$ be a Banach space with a closed linear subspace $\Theta\subsetneqq\Xc^*$,  
and denote by $\Kc_\Theta(\Xc)$ the norm-closed linear subspace of $\Kc(\Xc)$ spanned by the rank-one operators $\theta(\cdot)y$ with $\theta\in\Theta$ and $y\in\Xc$. 
Then one has $\Kc_\Theta(\Xc)\subsetneqq\Kc(\Xc)$. 
\end{lemma}

\begin{proof}
Since $\Theta\subsetneqq\Xc^*$, there exists $\xi\in\Xc^*$ with \begin{equation}\label{21feb2018_eq1}
(\forall\theta\in\Theta)\quad \Vert\xi-\theta\Vert\ge 2. 
\end{equation}
We now fix $x\in\Xc$ arbitrary with $\Vert x\Vert=1$ and we will check that 
\begin{equation}\label{21feb2018_eq2}
(\forall K\in\Kc_\Theta(\Xc))\quad \Vert\xi(\cdot)x- K\Vert\ge 1/2, 
\end{equation}
and in particular the rank-one operator $\xi(\cdot)x$ does not belong to $\Kc_\Theta(\Xc)$, which implies the assertion. 

It suffices to prove \eqref{21feb2018_eq2} for operators $K$ for which there exist an integer $n\ge 1$, vectors $y_1,\dots,y_n\in\Xc$ and functionals $\theta_1,\dots,\theta_n\in\Theta$ with 
$$K=\sum\limits_{j=1}^n\theta_j(\cdot)y_j, $$
 since the set of all operators of this form is norm-dense in $\Kc_\Theta(\Xc)$. 
To estimate the left-hand side of \eqref{21feb2018_eq2} we recall that 
for any operator $T\in\Bc(\Xc)$ one has 
\begin{equation}\label{21feb2018_eq3}
\Vert T\Vert=\sup\{\vert\eta(Tv) \vert\mid v\in\Xc, \Vert v\Vert=1; \ \eta\in\Xc^*,\Vert\eta\Vert=1\}, 
\end{equation}
as a direct consequence of the Hahn-Banach theorem.

Since $\Vert x\Vert=1$,  again by the Hahn-Banach theorem,  there is $\eta\in \Xc^*$ with $\Vert\eta\Vert=1$ 
and 
\begin{equation}\label{21feb2018_eq4}
\vert \eta(x)\vert>1/2.
\end{equation}
In particular, $\eta (x)\ne0$. 

For all $t_1,\dots,t_n\in\CC$ one has $\Vert\xi-\sum\limits_{j=1}^n t_j\theta_j\Vert\ge 2$ by \eqref{21feb2018_eq1}, hence 
there exists $v\in\Xc$ with $\Vert v\Vert=1$ and 
$\vert\xi(v)-\sum\limits_{j=1}^n t_j\theta_j(v)\vert\ge 1$. 
Using this with $t_j:=\eta(y_j)/\eta(x)$ for $j=1,\dots,n$, we obtain 
\begin{equation}\label{21feb2018_eq5}
(\exists v \in\Xc,\ \Vert v \Vert=1)\quad 
\vert\eta(x)\xi(v)-\sum\limits_{j=1}^n \eta(y_j)\theta_j(v)\vert\ge \vert\eta (x)\vert. 
\end{equation}
Now, denoting $T:=\xi(\cdot)x- K=\xi(\cdot)x- \sum\limits_{j=1}^n\theta_j(\cdot)y_j$, one has
$$
\eta (Tv )
=\eta (x)\xi(v)-\sum\limits_{j=1}^n \eta(y_j)\theta_j(v)
$$
and, by \eqref{21feb2018_eq4}--\eqref{21feb2018_eq5}, 
we obtain $\vert\eta(Tv)\vert\ge 1/2$. 
Then $\Vert T\Vert\ge 1/2$ by \eqref{21feb2018_eq3}, 
which completes the proof of \eqref{21feb2018_eq2}. 
\end{proof}

\begin{lemma}\label{non-dense}
Let $\Vc$ be a finite-dimensional real vector space with $\Vc\ne\{0\}$, and define the Banach space $\Xc:=L^1(\Vc)$. 
Let $\Kc_1(\Xc)$ be the norm-closed linear subspace of $\Bc(\Xc)$ 
spanned by the integral operators on $\Xc$ defined by integral kernels in $\Sc(\Vc\times\Vc)$. Then one has $\Kc_1(\Xc)\subsetneqq\Kc(\Xc)$.
\end{lemma}

\begin{proof}
For every $K\in\Sc(\Vc\times\Vc)$ let us denote by $T_K\in\Bc(\Xc)$ the integral operator defined by the integral kernel~$K$. 
We also define the mapping 
$$\Psi\colon \Sc(\Vc\times\Vc)\to\Bc(\Xc),\quad 
\Psi(K):=T_K.$$
One has that
$$(\forall K\in \Sc(\Vc\times\Vc))\quad 
\Vert T_K\Vert\le\sup_{y\in\Vc}\int\limits_{\Vc}\vert K(x,y)\vert\de x, $$
thus the mapping $\Psi$ is continuous with respect to the usual Fr\'echet topology of $\Sc(\Vc\times\Vc)$. 
Since the algebraic tensor product $\Sc(\Vc)\otimes\Sc(\Vc)$ is dense in $\Sc(\Vc\times\Vc)$, it then follows that 
$\Kc_1(\Xc)$ is equal to the norm-closure of the set 
$\{\Psi(K)\mid K\in \Sc(\Vc)\otimes\Sc(\Vc)\}$. 
As this set consists of finite-rank operators on $\Xc$, 
we obtain $\Kc_1(\Xc)\subseteq\Kc(\Xc)$. 

To check that this inclusion is strict we define $\Theta$ as the norm-closure of $\Sc(\Vc)$ viewed as a linear subspace of the Banach space $\Xc^*\simeq L^\infty(\Vc)$. 
That is, $\Theta$ is the space of continuous functions on~$\Vc$ that vanish at~$\infty$. 
One clearly has $\Theta\subsetneqq \Xc^*$, hence $\Kc_\Theta(\Xc)\subsetneqq\Kc(\Xc)$ by Lemma~\ref{21feb2018}. 
On the other hand, since $\Sc(\Vc)\subseteq\Theta$, it is readily seen that $\Kc_1(\Xc)\subseteq\Kc_\Theta(\Xc)$, hence $\Kc_1(\Xc)\subsetneqq\Kc(\Xc)$. 
\end{proof}

\begin{theorem}\label{22feb2018} 
If $G$ is a nilpotent Lie group, then for every unitary irreducible representation $\pi\colon G\to\Bc(\Hc)$ on a Hilbert space~$\Hc$ with $\dim\Hc=\infty$ there exists a
uniformly bounded continuous representation $\pi_1\colon G\to\Bc(\Xc)$ 
which is almost equivalent to $\pi$ and does not have the CCR property.
\end{theorem}

\begin{proof}
We recall from \cite[Lemma 2.1]{ER95} that 
one can find a 
finite-dimensional real vector space $\Vc$ 
and a family of continuous representations $\pi_p$ 
of $G$ by isometries of $L^p(\Vc)$ for every $p\in[1,\infty)$, 
satisfying the following conditions: 
\begin{enumerate}
	\item The unitary representations $\pi$ and $\pi_2$ are unitary equivalent (so $\Vc\ne\{0\}$). 
	\item
	There exist continuous functions 
	$P \colon  G\times \Vc\to\RR$ and $Q \colon  G\times \Vc\to \Vc$
	such that, for every $g\in G$,  the function 
	$P(g,\cdot)$ is a polynomial 
	and the map 
	$Q(g,\cdot)$
	is a measure-preserving polynomial diffeomorphism of~$\Vc$ with 
	$(\pi_p(g)\varphi)(v)=\ee^{\ie P(g,v)}\varphi(Q(g,v))$ 
	for all $g\in G$, $v\in\Vc$, $\varphi\in \Sc(\Vc)$, 
	and $p\in[1,\infty)$. 
\end{enumerate}

We now check that the representation $\pi_1$ does not have the CCR property. 
By \cite[Th. 4.2.1]{CG90}, 
for every $p\in[1,\infty)$ one has $\pi_p(\Sc(G))\subseteq\Kc_p(L^p(\Vc))$, 
where $\Kc_p(L^p(\Vc))$ denotes the norm-closed linear subspace spanned by the integral operators on $L^p(\Vc)$ defined by integral kernels in $\Sc(\Vc\times\Vc)$. 
For $p=1$ one obtains 
 $\pi_1(\Sc(G))\subseteq\Kc_1(\Xc)\subsetneqq\Kc(\Xc)$ by Lemma~\ref{non-dense}, which directly implies that the representation $\pi_1$ does not have the CCR property. 

To complete the proof it remains to show that the representations $\pi_2$ and $\pi_1$ are almost equivalent. 
To this end we will check that the space of smooth vectors for 
$\pi_p$ is $\Sc(\Vc)$ for arbitrary $p\in[1,\infty)$. 
We denote by  $\Pc(\Vc)$ the set of all linear partial differential operators on $\Vc$ with polynomial coefficients.
It follows by \cite[Th. 4.1.1]{CG90} that $\de\pi_2(\U(\gg_{\CC}))=\Pc(\Vc)$. 
Since $\pi_p$ and $\pi_2$ agree on $\Sc(\Vc)\subseteq L^p(\Vc)\cap L^2(\Vc)$, we actually have that  $\de\pi_2(\U(\gg_{\CC}))=\de\pi_p(\U(\gg_{\CC}))=\Pc(\Vc)$. 
On the other hand, for $1\le p\le \infty$,  the seminorms on $\Sc(\Vc)$ 
$$ \varphi\mapsto \sup_{|\alpha|+|\beta|\le  k} \Vert x^\alpha D^\beta \varphi\Vert_{L^p(\Vc)}, \quad k\in \NN, $$
are continuous and define the same topology. 
We thus obtain that, if   $\{X_1,\dots,X_m\}$ is a basis in~$\gg$ 
and $1\le p<\infty$, 
the seminorms 
$$ \varphi\mapsto \sup_{ m\le  k} \Vert \de\pi_p(X_{j_1})\cdots\de\pi_p(X_{j_m})\varphi \Vert_{L^p(\Vc)}, \quad k\in \NN, $$
are continuous and define the same topology on $\Sc(\Vc)$. 
Then, by \cite[Corollary~4.1]{Po72}, the spaces of smooth vectors for 
$\pi_p$ and $\pi_2$ are the same, and this space is $\Sc(\Vc)$.  
(See also \cite[page 346]{How77} for the relation between $\Sc(\Vc)$ and $\Pc(\Vc)$.)
This concludes the proof. 
\end{proof}


\begin{remark}\label{transitive1}
	\normalfont
	Assume the setting of Theorem~\ref{Banach5} and denote by $\Ac_\pi$ the norm-closure of $\pi(L^1(G))$. Then   
	the CCR property of $\pi$ means $\Ac_\pi=\Kc(\Zc)$ when $\Zc$ is reflexive. 
	An alternative proof of this equality, which however gives no information on~$\pi(\Sc(G))$, can be obtained as follows. 
	
	The set $\Ac_\pi$ is a closed subalgebra of $\Bc(\Zc)$. 
	Since the representation $\pi$ is irreducible,  
	then Lemma~\ref{Banach3} shows that the operator algebra $\Ac_\pi$ is transitive, that is, 
	there are no nontrivial invariant subspaces $\Ac_\pi$. 
	Moreover, one has $\Ac_\pi\subseteq\Kc(\Zc)$ by Theorem~\ref{Banach5}\eqref{Banach5_item1}, 
	and in particular $\Ac_\pi\cap\Kc(\Zc)\ne\{0\}$. 
	If the Banach space $\Zc$ is reflexive, it then follows by \cite[Cor. 7.4.8]{RaRo00} that $\Ac_\pi$ contains all finite-rank operators on~$\Zc$, hence $\Ac_\pi\supseteq\Kc(\Zc)$, 
	which implies $\Ac_\pi=\Kc(\Zc)$.
	
	On the other hand, it is known from \cite[Ex. 7.4.4]{RaRo00} that on the dual of every non-reflexive Banach space there exists a transitive norm-closed operator algebra that contains some but not all finite-rank operators. 
	The result of our Theorem~\ref{22feb2018} and its proof provide examples of transitive operator algebras of this type which live however on a different class of Banach spaces, namely on Banach spaces of the form $L^1(\Vc)$ where $\Vc$ is a finite-dimensional real vector space. 
\end{remark}

\subsection*{Acknowledgment}
We wish to thank Jean Ludwig and Yuri Turovski\u\i\ for their kind assistance with some pertinent references and remarks.

\end{document}